\title[Adaptive Online Non-stochastic Control]{Adaptive Online Non-stochastic Control}
\newcommand{\sumT}{\sum_{t=1}^T}
\newcommand{\dtp}[2]{\langle {#1}, {#2} \rangle}
\newcommand{\grd}{\nabla}
\renewcommand{\vec}[1]{\bm{#1}}
\DeclareMathOperator*{\argmin}{argmin}
\DeclareMathOperator*{\minimize}{minimize}
\begin{document}

\maketitle

\begin{abstract}%
We tackle the problem of Non-stochastic Control (NSC) with the aim of obtaining algorithms whose policy regret is proportional to the difficulty of the controlled environment. Namely, we tailor the Follow The Regularized Leader (FTRL) framework to dynamical systems by using regularizers that are proportional to the actual witnessed costs. The main challenge arises from using the proposed adaptive regularizers in the presence of a state, or equivalently, a memory, which couples the effect of the online decisions and requires new tools for bounding the regret. Via new analysis techniques for NSC and FTRL integration, we obtain novel disturbance action controllers (DAC) with sub-linear data adaptive policy regret bounds that shrink when the trajectory of costs has small gradients, while staying sub-linear even in the worst case.%

\end{abstract}

\begin{keywords}%
  Non-stochastic control; Follow the Regularized Leader; Online learning.%
\end{keywords}

\section{Introduction}\label{sec:intro}
This paper tackles the Online Non-stochastic Control problem: find a policy that endures minimum cost while controlling a dynamical system whose state changes via an unknown combination of learner's actions and external parameters. Optimal Non-stochastic control has significant applications ranging from the control of medical equipment \citep{app-ventilator} to energy management in data centers \citep{lee2021online}. This work advances the results on this fundamental problem by proposing optimal control algorithms based on adaptive online learning. 

\subsection{Background \& Motivation}
We consider a typical NSC problem with a time-slotted dynamical system \citep{pmlr-v97-agarwal19c}. Namely, at each time step, the controller observes the system state $\vec{x}_t \in \mathbb{R}^{d_x}$ and decides an action 
$\vec{u}_t \in \mathbb{R}^{d_u}$ which induces cost $c_t(\vec{x}_t, \vec{u}_t)$. Then, the system transitions to state $\vec{x}_{t+1}$. Note that the new state and cost function, at each step, are revealed to the controller \emph{after} it commits its action. Similar to \citep{pmlr-v97-agarwal19c}, we study Linear Time Invariant (LTI) systems where the transition is parametrized by matrices $A$, $B$ and a \emph{disturbance} vector $\vec w_t$:
\begin{align}
        \label{eq:LTIdynamics}
    \vec{x}_{t+1} = A \vec{x}_{t} + B \vec{u}_{t} + \vec{w}_{t}.
\end{align}
We allow $\vec{w}_t$ to be arbitrarily set by an \emph{adversary} that aims to manipulate the state transition, and we only restrict it to be universally upper-bounded, i.e., $\| \vec{w} \| \leq w$. Similarly, the adversary is allowed to select at each step \emph{any} Lipschitz continuous convex cost function $c_t:\mathbb{R}^{d_x}\times \mathbb{R}^{d_u}\mapsto \mathbb{R}$.

The controller's task is to deduce a (possibly non-stationary) policy that maps states to actions, $\pi: \vec{x}\mapsto \vec{u}$, from a policy class $\Pi$, leading to a trajectory of low costs $\{c_t(\vec x_t, \vec{u}_t)\}_{t=1}^T$. 
The employed performance metric in this setting is the \emph{policy regret}, cf. \citep{hazan20a-us}, which measures the accumulated extra cost endured by the learner's policy compared to a stationary cost-minimizing policy designed with access to all future cost functions and disturbances:
\begin{align}
    \label{eq:regret-first-def}
    \mathcal{R}_T \doteq \sumT c_t\left(\vec x_t, \vec u_t\right) 
    - \min_{\pi\in\Pi} \sumT
    c_t\big(\vec x_t(\pi), \vec u_t(\pi)\big)
    \vspace{-2mm}
\end{align}
where $\vec x_t(\pi), \vec u_t(\pi)$ are the \emph{counterfactual} state-action sequences that would have been generated under the benchmark policy, whereas $(\vec x_t, \vec u_t)$ are the \emph{actual} state-action pair that resulted from following possibly different series of policies. A sublinear regret $\mathcal{R}_T\!=\!o(T)$ guarantees that the cost endured by the learner will converge to that of the optimal policy, i.e., $\nicefrac{\mathcal{R}_T}{T}\rightarrow 0$, and the recent Gradient Perturbation Controller (GPC), proposed in \citep{pmlr-v97-agarwal19c}, attains indeed $\mathcal{R}_T\!=\!\mathcal{O}(T^{\nicefrac{1}{2}})$. GPC's performance is established via a reduction to the Online Convex Optimization (OCO) with Memory framework \citep{NIPS2015_38913e1d}, which in turn established sublinear regret via a reduction to the standard OCO framework \citep{MAL-018}. 

This paper also aims to reuse results from OCO in NSC, but targets stronger guarantees than the typical $\mathcal{O}(T^{\nicefrac{1}{2}})$ regret. Namely, we aim to reduce the NSC problem to the \emph{adaptive} OCO framework, cf. \citep{mcmahan-survey17}, via the Follow-The-Regularized-Leader (FTRL) algorithm, which enables \emph{adaptive} regret bounds. Adaptive bounds depend on the observed losses $\{c_t(\cdot,\cdot)\}_{t=1}^T$, and are of the form $\mathcal{O}\big((\sumT  g_t)^{\nicefrac{1}{2}}\big)$, where $g_t \doteq \|G_t\|^2$ and $G_t \doteq  \grd c_t(\vec x_t, \vec u_t)$.\footnote{The gradient is w.r.t. the policy's parametrization. Its existence and boundness depend on $\Pi$ as will be detailed later.} Hence, $\mathcal{R}_T$ remains sub-linear in $T$ in the worst case (since $\|G_t\|\leq g, \forall t$ for Lipschitz functions), but is much tighter when small gradients are observed, i.e., when the cost functions are ``easy"\footnote{For a convex function $f(\cdot)$, costs with small gradient norms induce small regret: $f(\vec{x})\! - f(\vec{y}) \leq \dtp{\grd f (\vec{x})}{\vec{x} - \vec{y}}$.}. Methods with such guarantees have received significant attention as they adapt to the environment (observed costs), and hence are less conservative than their non-adaptive counterparts \citep{duchi11a}. 

Alas, such stronger adaptive regret bounds are yet to be seen in the NSC framework, and this is not a mere artifact: adaptivity in stateful systems raises fresh challenges that require new technical solutions. Specifically, due to the system's memory, the cost at each time step depend not only on the adversary but also on past actions of the learner. This over-time coupling, which is of unknown intensity in the adaptive case, perplexes the optimization of the action at each step, and indeed the vast majority of OCO-based control approaches employ the non-adaptive variants of the learning algorithms (see discussion in Sec. 2). In light of the above, we ask the question: \emph{can we design algorithms with policy regret bounds that adapt to easy costs, while still maintaining sub-linear regret in all cases?} We answer in the affirmative and present a learning toolbox with policy regret bounds that are adaptive w.r.t. the adversity of the environment (disturbances and cost functions). 

\subsection{Methodology and Contributions}
The analysis approach of NSC algorithms relies on approximating the cost of the non-stationary policy $\sumT c_t\left(\vec x_t, \vec u_t\right)$ by the counterfactual stationary one $\sumT c_t\left(\vec x_t(\pi_t), \vec u_t(\pi_t)\right)$ with a sub-linear approximation term of order $\mathcal{O}(T^{\nicefrac{1}{2}})$. This is possible due to the structure of LTI systems, where the effect of far-in-the-past actions decays exponentially, deeming the cost similar to that of ``OCO with bounded memory" framework \citep{cesa2013online}. Then, Online Gradient Descend (OGD) is used to obtain an $\mathcal{O}(T^{\nicefrac{1}{2}})$ bound for the stationary policy regret, leading to a total policy regret of the same order. In our case, however, we adopt the more general FTRL framework, cf. \citep{mcmahan-survey17}, which makes it 
no longer apparent that such a reduction is possible. 

In detail, FTRL operates on the principle that at each $t$, the learner optimizes its decision\footnote{As will be detailed later, the action $\vec{u}_t$ is expressed as a linear combination of the parameters $M$.} $M_{t+1}$ by minimizing the aggregate cost until $t$, plus a strongly convex regularization term that penalizes the deviation from $M_t$. It is known that when these regularization terms are adaptive (i.e., the strong convexity is incremented at each $t$ proportionally to the witnessed cost of $M_t$ ), we get bounds proportional to the observed costs. Now, the issue is that unlike GPC where, at each $t$, the distance between two previous decision variables $M_s$ and $M_{s+1}$, $s\in[1,t-1]$ is $(i)$ fixed and $(ii)$ of order $\mathcal{O}(T^{-1/2})$, FTRL's adaptive regularizers make the distance between $M_s$ and $M_{s+1}$ $(i)$ dependant on the costs witnessed until $s$ (i.e, not fixed), and $(ii)$ independent of the horizon $T$ (or the time $t$). As these two properties were essential for proving GPC policy bounds, we are faced with a new challenge that is specific to FTRL and NSC integration. However, we show that the $c_t\left(\vec x_t, \vec u_t\right)$ cost can still approximate, up to a diminishing error, the counterfactual cost of $c_t\left(\vec x_t(\pi_t), \vec u_t(\pi_t)\right)$. This approximation effectively enables regularization based on the witnessed cost at $t$ and eventually leads to the sought-after adaptive regret bound of the form $\mathcal{O}\big((\sumT  g_{t})^{\nicefrac{1}{2}}\big)$.

At a more conceptual level, it is interesting to observe that GPC and similar controllers already incorporated some notion of ``adaptivity" in the sense that the performance guarantee is with respect to a \emph{benchmark} that depends on the observed costs. This is
clearly a more refined and less conservative approach than the classical $H_\infty$ control framework that benchmarks
itself against the worst-case scenario \citep{karapetyan2022regret}. In this work, we make the next step, and not only use a cost-dependent
benchmark like GPC, but also the performance of the controller w.r.t. that benchmark is itself shaped by the observed costs and perturbations. This is in contrast to GPC where the difference from the benchmark (i.e., the regret bound) does not depend on the encountered costs and perturbations. This
additional layer of adaptability expedites the learning rates whenever the problem allows it (i.e., smaller or less volatile gradients are encountered). While the proposed adaptive controller is designed to benefit from easy environments, it does suffer performance degradation when the environment is indeed adversarial, but only in terms of constant factors. Hence, it provides a fail-safe tool for NSC.

\subsection{Notation}
We denote scalars by small letters and use $h_{a:b}$ for $\sum_{s=a}^b h_s$. Vectors are denoted by bold small letters and matrices by capital letters. Both can be indexed with time via a subscript. We denote by $\{\vec{a}_t\}_{t=1}^T$ the set $\{\vec{a}_1, \dots, \vec{a}_T\}$, and $[T]$ denotes  $\{1,2,\ldots,T\}$. When $T$ is not relevant, we use $\{\vec{a}_t\}_t$. Vector/matrices elements are indexed via a superscript in parenthesis, e.g., $\vec{a}_t^{(i)}$. $M\! =\! [M^{[i]}| M^{[j]}]$ denotes the augmentation of $M^{[i]}$ and $M^{[j]}$.  $\|\cdot\|$ denotes the $\ell_2$ norm for vectors and the Frobenius norm for matrices.  $\|\cdot\|_*$ is the dual norm. 
$\|\cdot\|_{\text{op}}$ is the matrix spectral norm (the induced $\ell_2$ norm). 

\section{Related Work}
The NSC thread was initiated in \citep{pmlr-v97-agarwal19c} which designed the first policy with sub-linear regret for dynamical systems, aspiring to generalize the classical control problem to general convex cost functions and arbitrary disturbances. In essence, the system state is modeled as a limited memory and the problem is cast into the standard OCO framework, which allows recovering the OCO bounds scaled by the memory length. Follow-up works refined these results for strongly convex functions \citep{simchowitz2020making,Agarwal-log,foster20b-log2}; and systems where the actions are subject to fixed or adversarially-changing constraints \citep{Li_Das_Li_2021, liu2023nsc-const}. NSC was also extended to systems where matrices $(A,B)$ are unknown \citep{hazan20a-us}, systems with bandit feedback \citep{GHH}, and time-varying systems \citep{gradu23a-ltv}. As expected, the regret bounds deteriorate in these cases, e.g., becoming $\mathcal{O}(T^{\nicefrac{2}{3}})$ for unknown systems and $\mathcal O(T^{\nicefrac{3}{4}})$ for systems with bandit feedback. All these works provide bounds that scale with the number of time steps $T$, as opposed to the fully adaptive bounds presented here, which are proportional to the witnessed costs and perturbations. 

The NSC framework was also investigated using variations of regret metric that e.g., compare the learner's decisions to the optimal one for each step $t$, or for an interval $\mathcal{I}\in [T]$. For instance, \citep{zhao2022non} recovered the $\mathcal O\big((TP_T)^{\nicefrac{1}{2}}\big)$ dynamic regret OCO bounds, where $P_T$ measures the times the optimal solution changes. Nonetheless, methods with static regret guarantees of the sort discussed in this paper, are building blocks for dynamic regret algorithms via, e.g., using them as ``base controllers" within  meta-controllers \citep{gradu23a-ltv, simchowitz20a-part-state}, and hence are directly relevant for these benchmarks, too. Competitive ratio is another metric that aims to minimize the ratio in accumulated cost between the benchmark policy and that of the learner. These algorithms have different semantics from the model considered here. For example, the adversary needs to reveal information about the cost function before the learner commits to a decision \citep{shi2020online}, or the cost is assumed to be a fixed quadratic function \citep{goel2022competitive}. Furthermore, it was shown in \citep{goel2023best} that having a regret guarantee against the optimal DAC policy  automatically guarantees a competitive ratio up to an additive sub-linear term. 
A recently refreshed line of work that also considers adversarial costs and transitions in stateful systems is Adversarial MDPs \citep{jin2023no}. Unlike NSC, this line considers finite states and actions, whose sizes appear in the bounds. Thus, their algorithms are irreducible to our settings. 

Adaptivity has been an important concept in OCO, and we refer the readers to the survey in \citep{mcmahan-survey17}, or the remarks in \citep[Sec. 3.5]{orabona2021modern} for details. In adaptive learning the regret bounds scale proportionally to the witnessed costs. Hence, for \emph{easy} environments with small costs, the bounds are considerably tighter than the standard worst-case ones which assume maximum cost at each round. On the other hand, if the environment follows actually a worst-case scenario, i.e., the adversary induces costs with large gradients that fluctuate aggressively, the adaptive bounds remain sublinear but have worse constant factors. Indeed, there is a price to be paid for adaptivity. For policy regret, the only form of adaptive bounds appears in \citep[Thm. 2]{zhang2022adversarial}, which, however, contain additive constants\footnote{These works also target the  ``adaptive regret" metric.} (i.e., do not directly collapse to $0$ even when all costs are $0$). Additionally, the authors do not consider the DAC policy class but the actions are rather a result of  a ``betting" technique that is suited to the ``tracking" sub-task in control. In fact, any DAC policy can be combined with their meta-algorithm to obtain guarantees on the ``tracking", and hence our works are complementary (See discussion in \citep[Sec 4.2]{zhang2022adversarial}). 
\section{Preliminaries}
\label{sec:preliminaries}
We introduce here the technical setup of NSC tailored to the assumptions and goals of this paper.
\textbf{Policy class.} We consider the class $\Pi^{}$ of Disturbance Action Controllers (DAC) that was introduced in \citep{pmlr-v97-agarwal19c}. A policy $\pi\in\Pi^{}$, with memory length $p$, is parametrized by $p$ matrices $M\doteq\big[M^{[1]}|M^{[2]}|\dots |M^{[j]}|\dots|M^{[p]}\big]$, and a fixed stabilizing controller $K$. We also define the set $\mathcal{M}\doteq\{M:\|M\| \leq \kappa_M \}$, which uses the standard bounded variable assumption. The action at a step $t$ according to a policy $\pi_t\in \Pi$, is then calculated via:
\begin{align}
    \label{eq:dac-action}
    \vec{u}_{t} = K \vec{x}_t + \sum_{j=1}^p M^{[j]}_t \vec{w}_{t-j}.
\end{align}

\textbf{Strong stability}. This assumption is standard in OCO-based control as it enables non-asymptotic analysis \citep[Def. 3.1]{cohen2018online}. It ensures the existence of a stabilizing controller $K$, such that $\|(A+BK)^t\|_{\text{op}}<\kappa (1\!-\delta)^t, \delta\in(0,1]$, $\kappa>0$. Here, we assume that $\|A\|_{\text{op}}\leq 1-\delta$, which allows us to satisfy the stability assumption with $K$ being the zero matrix. This simplification facilitates the analysis, but the obtained results are still extensible for nonzero $K$ since $K$ is an external parameter to NSC algorithms, see, e.g., the discussion in \citep[Remark 4.1]{zhang2022adversarial}. We also assume $\|B\|\leq \kappa_B$, while the boundedness of $\|A\|$ follows from its spectral norm bound. 

\textbf{Cost functions}. We consider the family of general convex functions for the losses, and we denote with $G_t(M)$ the gradient matrix\footnote{Since our decision variable is in $\mathcal{M}$, the gradient ``vector" can be organized into a matrix.} of the cost $c_t(\vec x, \vec u)$ w.r.t. $M$. If the argument $M$ is not relevant (e.g., $c_t$ is linear) or is fixed to $M_t$, then we denote the gradient simply with $G_t$. We also use the standard $l$-Lipschitz assumption $| c_t(\vec{x},\vec{y}) - c_t(\vec{x}',\vec{y}') | \leq l \| (\vec{x},\vec{u}) - (\vec{x}',\vec{u}') \|, \forall t\in [T].$

\textbf{DAC rationale}. The  DAC class strikes a balance between efficiency and performance. Specifically, both the states and actions are convex in the optimization variables $M$. This can be directly seen from \eqref{eq:dac-action} for the actions, whereas the state can be shown to be linear  by unrolling the dynamics equation in \eqref{eq:LTIdynamics} (e.g., see \citep[Lem. 4.3]{pmlr-v97-agarwal19c}) which we adapt below to our notation: 
\begin{lemma}
\label{lemma-dac-state}
Assuming that $\vec x_{1} =0$, and parameters $M_t, \vec w_t $ are $0$ for $t \leq 0$, the state of the system reached at $t+1$ upon the execution of actions $\{\bm u_i\}_{i=1}^t$, derived from a DAC policy $\pi_t$, is:
\begin{align}
\vec x_{t+1} = \sum _{i=0}^t A^i \bigg(\!B\sum_{j=1}^p \big(M^{[j]}_{t-i}\ \vec w_{t-i-j}\big) + \vec w_{t-i}\!\bigg).
\label{eq:state-nonstat-m}
\end{align}    
\end{lemma}
Clearly, $c_t(\Vec{x}_t, \Vec{u}_t)$ is convex in $M$ since $\Vec{x}_t$ and $\Vec{u}_t$ are linear in variables $M$. This facilitates the minimization of the cost function.
Note that the boundness on the norm of $\vec x$ and $\vec u$,\footnote{The bound on the state is due to the strong stability assumption and will be explicit in the analysis.} along with the Lipschitzness of $c(\cdot,\cdot)$ implies the existence of $g$ s.t. $\|G_t\| \!\leq\! g, \forall t$.

Besides allowing convex costs, DAC policies are expressive as they approximate the class of linear policies up to an arbitrarily small constant error $\zeta$ . This follows from the next lemma from \citep[Lem. 6.9]{hazan-nsc-book}.

\begin{lemma}\label{lemma:class-approx}
Let $\|A\|_{\text{op}} =1-\delta, \|\vec w\| \leq w$. Then, for any linear policy $\pi^{\mathbb{L}}$ with $\|K\| \leq \kappa^{\mathbb{L}}$, and an arbitrarily small constant $\zeta$, there is a DAC policy with $p=\lfloor\nicefrac{1}{\delta}\log\left(\nicefrac{\kappa^{\mathbb{L}}w}{\delta\zeta}\right)\rfloor$ that achieves a cost at most $\mathcal{O}(\zeta)$ far from the cost of the linear policy:
$|c_t\big(\vec x_t(\pi), \vec u_t(\pi)\big) - c_t\big(\vec x_t(\pi^{\mathbb{L}}), \vec u_t(\pi^{\mathbb{L}})\big)|  = \mathcal{O}(\zeta)$.
\end{lemma}

\textbf{Policy regret}. We proceed to define formally the regret of a policy. We start with the benchmark DAC policy $\pi_\star\in \Pi$, that is fully characterized by matrix $M_\star$ which can be calculated by solving:
\begin{align}
\textbf{P}_1: \minimize_{M\in\mathcal{M}} \ \sumT c_t\big(\vec x_t(\pi_\star), \vec u_t(\pi_\star)\big),\  
\text{subject to} \quad \vec{x}_{t+1}\! &=\! A \vec{x}_{t} \!+ B \vec{u}_{t} + \vec{w}_{t}, \, \ \ \ \  \forall t\in [T],  \label{bhs-prob-const-state} 
\\
    \quad  \vec{u}_{t} \!&=\! K \vec{x}_t \!+\! \sum_{j=1}^p\! M^{[j]} \vec{w}_{t-j}, \, \forall t\!\in [T].\!\!  \label{bhs-prob-const-dac}
\end{align}
Constraints \eqref{bhs-prob-const-state} enforce the LTI dynamics of the state transition, and \eqref{bhs-prob-const-dac} ensure the actions are taken from a DAC policy. Clearly, this hypothetical policy can only be calculated with access to future disturbances and costs, whereas the learner, at each step $t$, has access only to information  until $t\!-\!1$. 


There are some important notation remarks in order here. Recall that $\pi^{}_t$ is the policy at step $t$, where the learner decides $\vec u_t$ using $M_t$. The state reached at $t$ depends on the \emph{sequence} of policies $\pi_1,\pi_2,\dots,\pi_{t-1}$, also referred to as non-stationary policy, and we denote it with $\vec x_{t}\left(\pi_{1,\dots,t-1}\right)$. In contrast, the \emph{counterfactual} state reached at step $t$ by following a stationary policy $\pi$ at all steps up to $t$ is denoted $\vec x_t(\pi)$.\footnote{We call $\vec x_t(\pi)$ counterfactual since it is not necessarily equal to the true state of the system $\vec x_{t}\left(\pi_{1,\dots,t-1}\right)$.} Similarly, the  action at $t$ would in general differ for a stationary policy $\pi$ versus a non-stationary policy, i.e., $\vec u_t(\pi_t) \! \neq \! \vec u_t(\pi_{1,\dots,t})$. However, when $K$ is the zero matrix, these vectors are equivalent. 
To reduce clutter when possible, we omit the argument $\pi_{1,\dots,t-1}$ from $\vec{x_t}(\pi_{1,\dots,t-1})$. However, when we use the hypothetical state resulting from a fixed policy $\pi$, we make this explicit by writing $\vec{x}_t(\pi)$. The same applies to $\vec{u}_t$. Now, we define the policy regret as the cumulative difference between the cost induced by $\pi_\star$, and the cost of the non-stationary policy as: 
\begin{align}
&\mathcal{R}_T(\pi_{1,\dots,T},\pi_\star)\doteq \sumT \Big(c_t\big(\vec x_t(\pi_{1,\dots,t-1}), \vec u_t(\pi_{1,\dots,t})\big) 
    - 
    c_t\big(\vec x_t(\pi_\star), \vec u_t(\pi_\star)\big)\Big).
\end{align}
Because of Lemma \ref{lemma:class-approx}, $\pi_{1,\ldots, T}$ has also a regret guarantee against the best policy in the linear class. That is, there is a constant $a>0$, that depends only on $l$, $\kappa^{\mathbb{L}}$, and $\delta$, such that $\mathcal{R}_T(\pi_{1,\dots,T}, \pi^{\mathbb L}_\star) = \mathcal{R}_T(\pi_{1,\dots,T}, \pi_\star) + a\zeta T$. Hence, the sublinear regret rate can be preserved against any linear policy by tuning $\zeta$, which can be achieved by increasing the DAC memory parameter $p$ (see Lemma \ref{lemma:class-approx}). 
Next, we design algorithms that minimize $\mathcal{R}_T(\pi_{1,\dots,T}, \pi_\star)$. 

\section{\texttt{AdaFTRL-C}}
We propose an algorithm (\texttt{AdaFTRL-C}) for optimizing the policy parameters $M_t$, $t\in [T]$. The algorithm uses the FTRL update formula:
\begin{align}
    M_{t+1} = \argmin_{M \in \mathcal{M}} \left\{\sum_{s=1}^{t} c_s (\vec x_s(\pi), \vec u_s(\pi)) + r_s(M)\right\}. \label{eq:ftrl-general-step}
\end{align}
For the incremental regularizers $r_t(\cdot)$, we define\footnote{Unlike the discussion in the introduction, here we have $g_t$ as the max of the norm and its square. The necessity of this originates from lower bounds on OCO with switching costs \citep{gofer2014higher} and will become clear in the analysis.} $g_t \doteq \max (\|G_t\|,\|G_t\|^2)$ and use:
\begin{align}
&r_t(M) = \frac{\sigma_t}{2}\|M - M_t\|^2,
\ \text{where }\sigma_1 = \sigma\sqrt{g_1},\
\sigma_t = \sigma\big(\sqrt{g_{1:t}}\!-\!\sqrt{ g_{1:t-1}}\big)  \label{eq:memory-data-adap-regs-params}.
\end{align}
Note that the sum of the counterfactual cost sequence in \eqref{eq:ftrl-general-step} is indeed a function of $M$ since both $\vec x$ and $\vec u$ are written in terms of $M$ as shown earlier. Defining the norm $\|\cdot\|_{t} \doteq \sqrt{\sigma_{1:t}}\|\cdot\|$ we get that the regularizer $r_t(\cdot)$ is $1$- strongly convex w.r.t. norm $\|\cdot\|_{t}$, whose dual is $\|\cdot\|_{t,*}= \frac{1}{\sqrt{\sigma_{1:t}}}\|\cdot\|$.

Due to the cost linearization principle (see e.g., \cite[Sec. 2.4]{MAL-018}), \eqref{eq:ftrl-general-step} can be solved by two efficient steps: closed form solution of an unconstrained quadratic program, followed by a Euclidean projection, essentially matching the computational complexity of GPC.

Algorithm \ref{alg:adap-ftrl-c} summarizes the proposed routine; \texttt{AdaFTRL-C} first executes an action (line $3$). Then, the cost function is revealed and $G_t$ is recorded (line $4$). The system then transitions to state $\Vec{x}_{t+1}$, effectively revealing the disturbance vector $\Vec{w}_t$ (lines $5$, $6$). The strong convexity parameter is calculated (line $7$) and the next action is committed through updating the policy parameters (line $8$). The policy regret of this \texttt{AdaFTRL-C} routine is characterized by the following theorem:

\begin{algorithm}[!ht]
	\caption{\small{Adaptive FTRL Controller} (\texttt{AdaFRTL-C})}
	\label{alg:adap-ftrl-c}
	\begin{footnotesize}
    \textbf{Input}: An intrinsically stable LTI system $(A,B)$, $ \quad $$z \doteq pw\kappa_B$, $    \quad\sigma=\sqrt{\frac{\delta^2+2lz}{2\delta^2 k_M^2}}$, \quad  $M_1\in \mathcal{M}$ .  \\
    \textbf{Output}: $\{\boldsymbol{u}_t\}_{t=1}^T$
    \begin{algorithmic}[1] 
        \FOR{$t=1,2,\ldots,T$}
            \STATE Use action $\vec{u}_t=\sum_{j=1}^p M_t^{[j]}\ \vec{w}_{t-j}$
            \STATE Observe cost $c_t(\vec{x_t}, \vec{u_t})$ and record the gradient  $G_t(M_t)$
            \STATE Observe new state $\vec{x_{t+1}}$
            \STATE Record the disturbance $\vec{w_t} = \vec{x}_{t+1} - A\vec{x_t} -B \vec{u_t}$
            \STATE Update regularization parameters $\sigma_t$ via \eqref{eq:memory-data-adap-regs-params}.
            \STATE Calculate ${M}_{t+1}$ via \eqref{eq:ftrl-general-step}.
        \ENDFOR
    \end{algorithmic}
\end{footnotesize}
\end{algorithm}
\vspace{-0.35cm}
\begin{theorem}\label{thm:gt-adap-ftrl}
Let $(A,B)$ be an intrinsically stable system (i.e., $\|A\|_{op}\!\leq\! 1\!-\!\delta$) with $\|B\|\leq \kappa_B$ and $\|\vec w\|\leq w$. Let the cost $c_t(\cdot,\cdot)$ be $l$-Lipschitz, and assume that the DAC parametrization $M=[M^{[1]},\dots, M^{[p]}]$ is bounded: $\|M\| \leq \kappa_M$. Define $z \doteq pw\kappa_B$ and $g_t\doteq\max (\|G_t\|,\|G_t\|^2)$. Algorithm \texttt{AdaFTRL-C} produces policies $\pi_{1,\dots,T}$ such that for all $T$, the following bound holds:
\begin{align}
    \mathcal{R}_T(\pi_{1,\dots,T}, \pi_\star) \leq 
\frac{2}{\delta}\sqrt{2\kappa_M^2\left(\delta^2+2lz\right)}\ \sqrt{\sumT g_t}\ \ .   
\end{align}
\end{theorem}

\textbf{Discussion.} \texttt{AdaFTRL-C} achieves a policy regret  that scales according to the \emph{witnessed} costs' gradients. This can be much tighter than the bounds of non-adaptive controllers, like GPC, in easy environments. The improvement depends on how smaller the $\|G_t\|$ values are than $g$ (an upper bound on $\|G_t\|$). In the worst case ($\|G_t\|=g, \forall t$), our bound is worse by a constant factor compared to GPC\footnote{Using OGD update rule instead would result in better \emph{worst-case} bound by a factor of $\sqrt{2g}$.}, but maintains its sub-linearity. This is expected since \texttt{AdaFTRL-C} is conservative in regularization. Hence, when the losses are large, its performance is slightly degraded.

\textbf{Proof layout. } First, we bound the difference between the learner's accumulated cost, which is induced by $\pi_{1,\dots,t}$, and the counterfactual accumulated cost had the learner followed $\pi_t$ in \emph{all} steps $1, \dots, t$ (i.e., stationary policy $\pi_t$). Second, since the cost $c_t$ is convex in the parametrization $M_t$ of such stationary policy, we leverage the FTRL theory to bound the regret against the parameters $M_\star$.
\begin{lemma}\label{lemma:state-dev}
Let $c_t\left(\vec x_t, \vec u_t\right)$ be the cost induced by  following $\pi_{1,\dots,t}$, and $c_t\big(\vec x_t(\pi_{t}), \vec u_t(\pi_{t})\big)$ the cost induced by following the policy $\pi_t, \forall \tau\!\leq\! t$, where each $\pi_t$ is derived using \eqref{eq:ftrl-general-step}. Let $z \doteq pw\kappa_B$ Then:
    \begin{align}
        {\nu}_{t} \doteq \bigg|c_t\left(\vec x_t, \vec u_t\right) - c_t\big(\vec x_t(\pi_{t}), \vec u_t(\pi_{t})\big) \bigg| \leq lz\sum_{i=0}^{t-1} (1-\delta)^{i} \sum_{\tau=t-i-1}^{t-1}   \frac{g_\tau}{\sigma\sqrt {g_{1:\tau}}}  \doteq \widehat{\nu}_{t}.
    \end{align}
\end{lemma}
\begin{proof}
Since $c_t$ is $l$-Lipschitz, it suffices to bound the distance between its arguments; and because it holds\footnote{Recall the stability with $K=0$ assumption. Otherwise, action deviation is reducible to state deviation from \eqref{eq:dac-action}.} $\vec{u}_t(\pi_{1,\dots,t})\!=\!\vec{u}_t(\pi_{t})$, we only need to bound $\|\vec{x}_{t} \!-\! \vec{x}_t(\pi_{t}) \|$. From Lemma \ref{lemma-dac-state}:
   \begin{align}
    \vec{x}_{t} \!\!=\!\! \sum _{i=0}^{t-1} A^i \bigg(B\ \sum_{j=1}^p M^{[j]}_{t-i-1}\vec w_{t-i-j-1} + \vec w_{t-i-1}\bigg) 
 =    \sum _{i=0}^{t-1} A^i \big(BM_{t-i-1}\ \vec w_{t-i-1,p} + \vec w_{t-i-1}\big), \label{eq:state_under_nonstationary} 
\end{align}
where we defined $\vec w_{t-i,p}\doteq (\vec w_{t-i-1}, \dots, \vec w_{t-i-p})$ so as to express the vector $\sum_{j=1}^p M_t^{[j]} \ \vec w_{t-j}$ equivalently as $M_t\ \vec w_{t-1,p}$. Similarly, we can write:
\begin{align}
    \label{eq:state_under_stationary}
    \vec{x}_t(\pi_{t}) =  \sum _{i=0}^{t-1} A^i \big(BM_t\ \vec w_{t-i-1,p} + \vec w_{t-i-1}\big). 
\end{align}
Note that we have $M_t$ in \eqref{eq:state_under_stationary} instead of $M_{t-i-1}$ in \eqref{eq:state_under_nonstationary}. This is because for the counterfactual state $\vec x_t $, $M_{t-i-1} = M_t, \forall i<t$.
Now, subtracting the above two state expressions, we have: $\left \|\vec{x}_{t}- \vec{x}_t(\pi_{t}) \right\|$ 
\begin{align}
 &\stackrel{(a)}{\leq}
 \sum_{i=0}^{t-1} \|A^i B  ({M}_{t-i-1} - {M}_{t}) \|_{\text{op}} \|\vec{w}_{t-i-1,p}\| \stackrel{(b)}{\leq}
 \sum_{i=0}^{t-1} \|A\|^i_{\text{op}} \| B \|_{\text{op}}  \|({M}_{t-i-1} - {M}_{t})\|_{\text{op}} \|\vec{w}_{t-i-1,p}\|
 \\
&\stackrel{(c)}{\leq}
 \sum_{i=0}^{t-1} (1-\delta)^{i} \| B \|  \|({M}_{t-i-1} - {M}_{t})\| \|\vec{w}_{t-i-1,p}\|
 \stackrel{(d)}{\leq}pw\kappa_B\sum_{i=0}^{t-1} (1-\delta)^{i}  \| {M}_{t-i-1} - {M}_{t} \| 
 \\
 & \stackrel{(e)}{=}z\sum_{i=0}^{t-1} (1-\delta)^{i}  \| {M}_{t-i-1} - {M}_{t} \| \stackrel{(f)}{\leq} z\sum_{i=0}^{t-1} (1-\delta)^{i}  \sum_{\tau=t-i-1}^{t-1}   \frac{\|G_{\tau}\|}{\sigma_{1:\tau}}
 \stackrel{(g)}{\leq} z\sum_{i=0}^{t-1} (1-\delta)^{i}\!\!\!\sum_{\tau=t-i-1}^{t-1}\!\!\frac{g_\tau}{\sigma\sqrt {g_{1:\tau}}}
 \label{eq:v_t-state-dev1},
\end{align}
where $(a)$ follows by the triangular inequality and $\|A\vec x\| \leq \|A\|_{\text{op}} \|\vec x\|$, $(b)$ by the sub-multiplicative property of matrix norms,  $(c)$ by the bound on the spectral norm of $A$ and  $\|\cdot\|_{op}\leq \|\cdot\|$, $(d)$ by $\|B\| \leq \kappa_B$, $\|\vec w_{t,p}\|\leq \sum_{b=1}^p  \|\vec{w}_{t-b}\|=pw$,  $(e)$ by grouping  $z \doteq pw\kappa_B$, $(f)$ by the auxiliary Lemma \ref{lemma-McMahan-instead}, and finally $(g)$ by the definition of $g_t$. Using $l$-Lipschitzness completes the proof.
\end{proof}
Now that we have characterized the  stationary vs. non-stationary cost deviation, we proceed into $(i)$ bound the cost of the stationary policy $(ii)$ bound this deviation term.
\begin{proof} \textbf{of Theorem \ref{thm:gt-adap-ftrl}}. By the definition of $\nu_t$ (in Lemma \ref{lemma:state-dev}), we can directly bound the policy regret:
\begin{align}    
\sumT \Big( c_t(\vec x_t, \vec u_t) - c_t\big(\vec x_t(\pi_\star), \vec u_t(\pi_\star)\big)\Big) &\leq \sumT\Big( c_t\big(\vec x_t(\pi_{t}), \vec u_t(\pi_{t})\big) - c_t\big(\vec x_t(\pi_\star), \vec u_t(\pi_\star)\big)\Big) + {\nu}_{1:T}
\end{align}
For the first sum, note that $c_t\left(\vec x_t(\pi_t), \vec u_t(\pi_t)\right)$ is a function of only the iterates $M_t$ and recall that $M_t$ are computed according to \eqref{eq:ftrl-general-step}). Hence, we recognize this sum as simply the standard FTRL regret, which can be bounded using the known  bound  \citep[Thm. 2]{mcmahan-survey17}. Thus,
\begin{align}
    \mathcal{R}_T(\pi_{1,\dots,T}, \pi_\star)\leq \sumT r_t(M_\star) + \frac{1}{2}\sumT \big\|\grd c_t\big(\vec x_t(\pi_{t}), \vec u_t(\pi_{t})\big)\big\|_{t,*}^2 \ \ + \nu_{1:T}.
\end{align}
We proceed to bounding each of the three summations above.
\begin{align}
    &(i):\sumT r_t(M_\star) = \sumT \frac{\sigma_t}{2}\| M_\star - M_t\|^2 \leq 2\kappa_M^2\sumT \sigma_t   \stackrel{(a)}{\leq} 2\sigma\kappa_M^2\sqrt{g_{1:T}},
\\
    &(ii):\frac{1}{2}\sumT \big\|\grd c_t\big(\vec x_t(\pi_{t}), \vec u_t(\pi_{t})\big)\big\|_{t,*}^2 \leq \frac{1}{2}  \sumT \frac{\|G_t\|^2}{\sigma_{1:t}}  \leq \frac{1}{2}\sumT \frac{g_t}{\sigma \sqrt{g_{1:t}}} \stackrel{(b)}{\leq} \frac{1}{\sigma}\sqrt{g_{1:T}}
\end{align}
where inequality $(a)$ is due to the telescoping sum of $\sigma_t$, and $(b)$ is due to the common tool \citep[Lemma 3.5]{AUER200248}, or its extended version \citep[Lem. 4.13]{orabona2021modern}). The $\nu_{1:T}$ term is especially challenging since the standard tools from OCO do not suffice. Namely, observe that we cannot use Auer's lemma again for $\widehat \nu_{1:T}$, since we are adding, at each $t$,  $t$ many terms, each divided by a different quantity. Hence, we prove a more general result in auxiliary Lemma \ref{lemma:new-ora-gen}, which, we believe, is of independent interest as it can be used to provide similar bounds in learning algorithms for systems with memory. With this new technical result at hand, we get:
\begin{align}
(iii): \nu_{1:T} \stackrel{(c)}{\leq} \widehat \nu_{1:T} = lz\sumT \sum_{i=0}^{t-1} (1-\delta)^{i}\!\!\!\! \sum_{\tau=t-i-1}^{t-1}\!\frac{g_\tau}{\sigma\sqrt {g_{1:\tau}}} \stackrel{(d)}{\leq} \frac{2lz}{\sigma\delta^2} \sqrt{g_{1:T}},
\end{align}
where $(c)$ is from the definition of $\widehat \nu_t$ and $(d)$ is due to using Lemma \ref{lemma:new-ora-gen} with $f(x)=\nicefrac{1}{\sqrt{x}}$, $a_t = g_t$, and $a_0=0$. Tuning $\sigma$ we get $\sigma\!\!=\!\!\sqrt{\frac{\delta^2+2lz}{2\delta^2 k^2}}$. Substituting in $(i), (ii), \text{and } (iii)$ we get the bound. 
\end{proof}
\vspace{-5mm}
\section{Numerical examples \& Conclusion}
We conclude with numerical examples of the implications of adaptivity in NSC. We consider an LTI system with $\vec{x}, \vec{u}\!\!\in\!\!\mathbb{R}^2$, $p\!\!=\!\!10$, and hence $M\!\!\in\!\! \mathbb{R}^{2\times20}$. The dynamics are $A=0.9\times\boldsymbol{I}_{2}, B= \boldsymbol{I}_2$, with perturbation of maximum magnitude of $w=\sqrt{2}$. We consider a linear cost $c_t=\dtp{\vec{\theta}_t}{\vec x_t}$, with $\vec{\theta}_t\in[-10,10]^2$ and hence $l=\sqrt{200}$. Note that for $G_t^{(i,j)}$, the coefficient of $M_t^{(i,j)}$ when $c_t(x_t(\pi),u_t(\pi))$ is written explicitly in terms of $M$, we have that $G_t^{(i,j)} \leq \theta^{(i)}_t \times \nicefrac{w_t^{j}}{0.1} =100$ and hence $g = 633$. $\vec \theta_t$ and $\vec w_t$ are set according to different scenarios in Fig. \ref{fig:ne}. In scenario $(a)$, the encountered environment ($\vec{\theta}_t$ and $\vec{w}_t$) are smaller than the worst-case costs (by a factor of $10$). \texttt{AdaFTRL-C} takes advantage of this and accelerates the optimization, leading to an improved average regret by roughly the same order. In fact, (a) represents a whole class of easy environments where the witnessed costs are smaller than their maximum values, and in all these cases, \texttt{AdaFTRL-C} outperforms GPC. In the worst-case scenario (b), the degradation of \texttt{AdaFTRL-C} reaches a maximum of only $3.9$ times that of GPC. Lastly, (c) serves to demonstrate that even in highly volatile environments and worst-case costs/perturbation, \texttt{AdaFTRL-C} closely matches the performance of GPC. Concluding, we see that adaptivity offers a lot of potential gains for an entire family of easy environments, with tolerable degradation in the (single) worst-case scenario. The implementation of \texttt{AdaFTRL-C} and the code to reproduce the experiments are available \href{https://github.com/Naram-m/NSC}{here}.
\begin{figure}[h]
	\centering
\includegraphics[width=0.91\textwidth]{./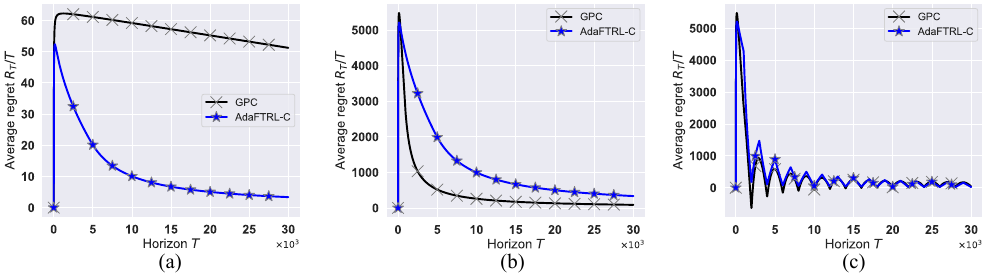}
\vspace{-2mm}
\caption{Average regret when $(a)$: $\vec\theta_t\!=\!(1,1), \vec w_t\!=\!-(0.1,0.1)\forall t$;\ $(b)$: $\vec\theta_t\!=\!(10,10), \vec w_t\!=\!-(1,1) \forall t$;  $(c)$: $\vec\theta_t\!=\!(10,10)$ or $-(10,10)$ (Alternating every $1000$ steps), $\vec w_t\!=\!-(1,1) \forall t$.}
	\label{fig:ne}
\end{figure}

\vspace{-10mm}
\section {Auxiliary Lemmas}
\begin{lemma}\label{lemma-McMahan-instead}
The distance between parameters $M_{t-i-1}$ and $M_t$, with $i\leq t-1$, where each $M_t$ is updated using \eqref{eq:ftrl-general-step}, with regularizers  \eqref{eq:memory-data-adap-regs-params}, can be bounded as $    \| {M}_{t-i-1} - {M}_{t} \| \leq \sum_{\tau=t-i-1}^{t-1}   \frac{\|G_{\tau}\|}{\sigma_{1:\tau}}$.
\end{lemma}

\begin{proof}
First, note that $\| {M}_{t-i-1} - {M}_{t} \| \leq \sum_{\tau=t-i-1}^{t-1}  \| {M}_{\tau} - {M}_{\tau+1} \|$. To bound each $\| {M}_{\tau} - {M}_{\tau+1} \|$, we leverage \citep[Lem. 7]{mcmahan-survey17} which states that given a convex function $\phi_1(\cdot)$ with a minimizer $ q_{1} \doteq \argmin_{ q}{\phi_1( q)}$ and a function $\phi_2(\cdot) \!= \phi_1(\cdot) \!+ \psi (\cdot)$ that is a strongly convex w.r.t a norm $\|\cdot\|$ and has minimizer $q_{2} \doteq \argmin_{ q}{\phi_2( q)}$, then we can bound the two minimizers as $ \|q_1\!-\!q_2\|\!\leq\| b\|_*$ for some (sub)gradient $b\in \partial \psi({q}_1)$. Now, we invoke this result by setting:
\begin{align}
&q_1=M_{\tau}, \ \ q_2={M}_{\tau+1}, \quad \phi_1(M) = \sum_{s=1}^{\tau-1}\Big( c_s \big(\vec x_s(\pi), \vec u_s(\pi_{})\big) + r_s(M)\Big) + r_{\tau}(M), \quad \text{and} \\
&\phi_2(M)= \sum_{s=1}^{\tau-1}\Big(c_s\big(\vec x_s(\pi), \vec u_s(\pi_{})\big) + r_s(M)\Big) + r_{\tau}(M)+\underbrace{ c_{\tau }\big(\vec x_{\tau}(\pi_{}), \vec u_{\tau}(\pi_{})\big)}_{\psi(M)}. 
\end{align}
The function $\phi_2(M)$ is strongly-convex w.r.t. the norm $\|\cdot\|_{\tau}=\sqrt{\sigma_{1:\tau}}\|\cdot\|$, a property inherited due to containing the sum of all regularizers up to step $\tau$, i.e., $\sum_{s=1}^{\tau}r_{s}(M)$. And the dual norm of the gradient of the above-defined $\psi(M)$ function, at each step $s$, is upper-bounded by $\|G_s\|_{\tau,*}$. 

Finally, it suffices to observe that: $(i)$ based on the definition of the update rule for variables $M$, and $(ii)$ the fact that $r_t(M), \forall t$ is a proximal regularizer thus $r_t(M_t)=0, \forall t$, we indeed have that $M_\tau = \argmin_{M\in\mathcal M} \phi_1(M)$ and $M_{\tau+1} = \argmin_{M\in\mathcal M} \phi_1(M) + \psi(M)$.
Now, applying \citep[Lem. 7]{mcmahan-survey17} we obtain $    \| {M}_{\tau} - {M}_{\tau+1} \|_\tau \leq \|G_\tau\|_{\tau,*} \rightarrow \| {M}_{\tau} - {M}_{\tau+1} \| \leq \frac{\|G_\tau\|}{\sigma_{1:\tau}}$.
\end{proof}

\begin{lemma} 
    \label{lemma:new-ora-gen}
    Let $a_\tau\geq 0\ \forall \tau$, $\delta \in (0,1]$ and $f:[0, \infty] \mapsto [0, \infty)$ be a non-increasing function. Then:
    \begin{align}
    \mathcal{S} \doteq \sumT \sum_{i=0}^{t-1} (1-\delta)^i \sum_{\tau = t-i}^t
    a_{\tau} \ f(a_{0:\tau}) \leq \frac{1}{\delta^2}\int_{a_0}^{a_{1:T}}{f(x)dx}
    \end{align}
\end{lemma}
\begin{proof}
Instead of summing over all $i < t$ for each $t \in [1,T]$, the sum  $\mathcal{S}$ can be equivalently re-written by summing over all $t > i$ for each $i \in [0,T]$:
\begin{align}
    \mathcal{S} = \sum_{i=0}^T (1-\delta)^i\sum_{t=i+1}^T \sum_{\tau = t-i}^t
    a_{\tau} \ f(a_{0:\tau}) \label{eq:i-sum}
\end{align}
Now we bound $ \mathcal{S}'\doteq\sum_{t=i+1}^T \sum_{\tau = t-i}^t a_{\tau} \ f(a_{0:\tau}) $. Let $s_t\doteq a_{0:t}$ 
\begin{align}
    &\sum_{\tau = t-i}^t
    \!\!a_{\tau}\!\! \ f(a_{0:\tau})   \stackrel{(a)}{=} \sum_{\tau=0}^i a_{t-\tau} f(s_{t-\tau}) \stackrel{(b)}{=} \sum_{\tau=0}^{i} \int_{s_{t-\tau-1}}^{s_{t-\tau}} f(s_{t-\tau})dx \stackrel{(c)}{\leq} \sum_{\tau=0}^{i} \int_{s_{t-\tau-1}}^{s_{t-\tau}} f(x)dx.
    \\
    &\text{Hence, }
    \mathcal{S}' \leq \sum_{\tau=0}^{i} \sum_{t=i+1}^T \int_{s_{t-\tau-1}}^{s_{t-\tau}}\!\!\!\! f(x)dx \stackrel{(d)}{=} \sum_{\tau=0}^{i}  \int_{s_{i-\tau}}^{s_{T-\tau}} \!\!\!\!f(x)dx 
    \stackrel{(e)}{=} \sum_{\tau=0}^{i}  \int_{s_{0}}^{s_{T}}\!\!\!\!f(x)dx
    =(i\!+\!1)\!\!\int_{s_{0}}^{s_{T}}\!\!\!\! f(x)dx.
\end{align}
Where $(a)$ follows by changing the sum index, $(b)$ is from $\int_{n}^m c\ dx = (m-n)c $ $,\forall n,m,c \in \mathbb{R}$, $(c)$ from $f(\cdot)$ being non-increasing, $(d)$ from the additive property of integrals, and $(e)$ from expanding the integration limits and positivity of $f(\cdot)$. Substituting the bound for $\mathcal{S}'$ back in \eqref{eq:i-sum} we get: 
\begin{align}
     \mathcal{S} \leq &\sum_{i=0}^T (1-\delta)^i (i+1)\!\!\int_{s_{0}}^{s_{T}}\!\!\!\!f(x)dx \leq \frac{1}{\delta^2}\int_{s_{0}}^{s_{T}} \!\!\!\!f(x)dx.
\end{align}
Where we used that $\sum_{i=0}^\infty\! i(1\!-\!\delta)^i\!\leq\!\nicefrac{1\!-\!\delta}{\delta^2}$ and $\sum_{i=0}^\infty (1\!-\!\delta)^i\!\!\leq\!\nicefrac{1}{\delta}$.
\end{proof}

\clearpage
\bibliography{nscref}
\clearpage
\section{Appendix}
\subsection{Proof of Lemma \ref{lemma-dac-state}}
Define 
\begin{align}
    \alpha_x(y) &\doteq \!  A^{x} \bigg( B\!\sum_{j=1}^p\big( M^{[j]}_{y-x}\ \vec{w}_{y-x-j}\!\big)+\vec w_{y-x}\bigg), x,y\in\mathbb{N}. 
\end{align}
so as to write the state as
\begin{align}
\vec x_{t+1}= \sum _{i=0}^t A^i \bigg(\!B\sum_{j=1}^p \big(M^{[j]}_{t-i}\ \vec w_{t-i-j}\big) + \vec w_{t-i}\!\bigg)\!=\! \sum _{i=0}^t \! \alpha_i(t)
\end{align}  
\begin{proof}
We prove the expression by induction. For $t=1$, \eqref{eq:state-nonstat-m} reduces to $\vec x_2=\vec w_1$ which follows directly from the dynamic equation in \eqref{eq:LTIdynamics} after substituting the assumptions on the initial state and actions. Then, assuming that \eqref{eq:state-nonstat-m} is true for  any $t$, we have that
\begin{align}
    \vec{x}_{t+2} &= A \vec{x}_{t+1} + B \vec{u}_{t+1} + \vec{w}_{t+1} 
    \\
    &= A \vec{x}_{t+1} + B \left(\sum_{j=1}^{p}M^{[j]}_{t+1}\ \vec w_{t+1-j}\right) + \vec{w}_{t+1} 
    \\
    &= A \sum _{i=0}^t A^i \left(B\sum_{j=1}^p M^{[j]}_{t-i} \vec w_{t-i-j} + \vec w_{t-i}\right)  + B \left(\sum_{j=1}^{p}M^{[j]}_{t+1}\vec w_{t+1-j}\right) + \vec{w}_{t+1} 
    \\
    &= A \sum _{i=1}^{t+1} A^{i-1} \left(B\sum_{j=1}^p M^{[j]}_{t-i+1} \vec w_{t-i-j+1} + \vec w_{t-i+1}\right)  + B \left(\sum_{j=1}^{p}M^{[j]}_{t+1}\vec w_{t+1-j}\right) + \vec{w}_{t+1} 
    \\
    &=  \sum _{i=1}^{t+1} \underbrace{A^{i} \left(B\sum_{j=1}^p M^{[j]}_{t-i+1} \vec w_{t-i-j+1} + \vec w_{t-i+1}\right)}_{\alpha_i (t+1)}  + \underbrace{B \left(\sum_{j=1}^{p}M^{[j]}_{t+1}\vec w_{t+1-j}\right) + \vec{w}_{t+1}}_{\alpha_{0}(t+1)} = \sum _{i=0}^{t+1} \alpha_i(t+1)
\end{align}
\end{proof}

\subsection{Proof of Lemma \ref{lemma:class-approx}}
To prove Lemma \ref{lemma:class-approx}, we need to make use of two known results in non-stochastic control. The first one, stated in Lemma \ref{lemma:state-under-linear} 
characterizes the state under any linear policy. The second, stated in Lemma \ref{lemma:state-deviation-two-policies}, relates the state deviation between DAC and linear policies to the deviation of their actions. 

\begin{lemma}
\label{lemma:state-under-linear}
Assuming that $\vec x_{1} =0$, and parameters $\vec w_t $ are $0$ for $t \leq 0$, the state of the system at $t+1$ upon the execution of actions $\{\bm u_s\}_{s=1}^t$ which are derived from the a linear policy parametrized by $K$: $\vec{u}_s = K \vec x_s $ can be written as: 
\begin{align}
    \vec x_{t+1} = \sum _{i=0}^t (A+BK)^i \vec w_{t-i}  = \sum _{i=0}^t \beta_i(t) \label{eq:state-nonstat-lin}, \quad
    \\
    \text{where for any } x,y\in\mathbb{N}, \quad \beta_x(y) \doteq  (A+BK)^{x}\ \vec w_{y-x} 
\end{align}
\end{lemma}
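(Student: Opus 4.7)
The plan is to prove this by induction on $t$, mirroring the structure of the proof of Lemma \ref{lemma-dac-state} given earlier in the paper. The key simplification compared to the DAC case is that substituting the linear action $\vec u_s = K\vec x_s$ into the LTI dynamics immediately collapses the state recursion into a clean one-step map $\vec x_{t+1} = (A+BK)\vec x_t + \vec w_t$, so the inductive computation reduces to reindexing a geometric-style sum rather than juggling the double sum over $j$ and $i$ that appeared in Lemma \ref{lemma-dac-state}.

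For the base case, I would take $t=1$, at which point the claimed formula reduces to $\vec x_2 = (A+BK)^0 \vec w_1 = \vec w_1$. Plugging $\vec x_1 = 0$ and $\vec u_1 = K\vec x_1 = 0$ into \eqref{eq:LTIdynamics} gives exactly $\vec x_2 = A\cdot 0 + B\cdot 0 + \vec w_1 = \vec w_1$, so the base case is immediate.

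For the inductive step, assume the formula holds at time $t$. Using \eqref{eq:LTIdynamics} and $\vec u_{t+1} = K\vec x_{t+1}$, I would write
\begin{align}
\vec x_{t+2} &= A\vec x_{t+1} + BK\vec x_{t+1} + \vec w_{t+1} = (A+BK)\vec x_{t+1} + \vec w_{t+1} \\
&= (A+BK)\sum_{i=0}^t (A+BK)^i \vec w_{t-i} + \vec w_{t+1} \\
&= \sum_{i=0}^t (A+BK)^{i+1} \vec w_{t-i} + (A+BK)^0 \vec w_{t+1},
\end{align}
and then relabel $i+1 \mapsto i'$ in the first sum so that the index runs from $1$ to $t+1$, and absorb the stray $\vec w_{t+1}$ term as the $i'=0$ contribution. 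This yields $\vec x_{t+2} = \sum_{i'=0}^{t+1} (A+BK)^{i'} \vec w_{(t+1)-i'}$, which is exactly the claimed formula at time $t+1$.

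The main obstacle here is essentially cosmetic: getting the index shift right so that the $\vec w_{t+1}$ term correctly fills the slot $i'=0$ and the upper limit becomes $t+1$. No separate treatment of the $t\leq 0$ boundary is needed because the assumption $\vec w_t = 0$ for $t\leq 0$ ensures the sum agrees with the convention used in the base case. The definition of $\beta_x(y) \doteq (A+BK)^x \vec w_{y-x}$ then packages the result in the notation used subsequently in the paper.
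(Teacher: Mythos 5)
Your proposal is correct and follows essentially the same route as the paper: induction on $t$, with the base case $\vec x_2 = \vec w_1$ and an inductive step that applies the closed-loop map $(A+BK)$ to the inductive hypothesis and reindexes the resulting sum. The paper's version writes $A\vec x_{t+1}+B(K\vec x_{t+1})$ before factoring out $(A+BK)$, but this is the same computation.
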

\begin{proof}
We prove the expression by induction. For $t=1$, \eqref{eq:state-nonstat-lin} reduces to $\vec x_2=\vec w_1$ which follows directly from the dynamic equation in \eqref{eq:LTIdynamics} after substituting the assumptions on the initial state and actions. Then, assuming that \eqref{eq:state-nonstat-lin} is true for some $t$, we have that
\begin{align}
    \vec{x}_{t+2} &= A  \vec{x}_{t+1} + B \vec{u}_{t+1} + \vec{w}_{t+1} = A \left(\sum _{i=0}^t (A+BK)^i \vec w_{t-i}\right)  + B \left(K \ \vec \sum _{i=0}^t (A+BK)^i \vec w_{t-i}\right) + \vec{w}_{t+1} 
    \\
    &= (A+BK) \sum _{i=0}^t (A+BK)^i \vec w_{t-i} + \vec{w}_{t+1} =  \sum _{i=1}^{t+1} \underbrace{(A+BK)^i \vec w_{t-i+1}}_{\beta_i(t+1)} + \underbrace{\vec{w}_{t+1}}_{\beta_0(t+1)} =    \sum _{i=0}^{t+1} \beta_i(t+1)
\end{align}
\end{proof}
Next, we need a lemma that characterizes the state deviation between any two policies 
\begin{lemma}\label{lemma:state-deviation-two-policies}
    let $\vec x_t({\pi_1})$ be the state of the system reached by following policy $\pi_1$ from the beginning of time. Analogously, let $\vec x_t({\pi_2})$ be the state resulting from following $\pi_2$. Let the system $(A,B)$ be intrinsically stable (i.e., $\|A\|_{\text{op}} \leq  (1-\delta)$, $\|B\| \leq 1$), and assume that the starting state and action are zero  $\vec{x}_1 = \vec 0, \vec{u}_1 = \vec 0$. Then, the following holds:
    \begin{align}
        \| \vec x_{t+1}({\pi_1}) - \vec x_{t+1}({\pi_2}) \| \leq  \sum_{i=0}^t \|A^{i}\|_{\text{op}}\|B \|\max_{j:j\leq t}  \|\vec{u}_{t-j}({\pi_1}) - \vec{u}_{t-j}({\pi_2})\| \leq \frac{1}{\delta} \max_{j:j\leq t}\|\vec{u}_{t-j}(\pi_1) - \vec{u}_{t-j}(\pi_2)\|. 
    \end{align}
In words, the deviation is fully controlled by the system stability and the maximum deviations of actions.
\end{lemma}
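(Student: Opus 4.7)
The plan is to unroll the LTI dynamics for both policies, use the shared disturbance sequence to cancel the noise terms, and then rely on strong stability to turn the resulting geometric tail into the $\sqrt{d_x}/\delta$ factor.

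First, I would instantiate the recursion $\vec x_{\tau+1} = A\vec x_\tau + B\vec u_\tau + \vec w_\tau$ with $\vec x_1 = \vec 0$ and unfold, giving the well-known closed form
\[
\vec x_{t+1}(\pi_k) \;=\; \sum_{i=0}^{t} A^{i}\bigl( B\,\vec u_{t-i}(\pi_k) + \vec w_{t-i}\bigr), \qquad k\in\{1,2\}.
\]
Here the disturbances $\vec w_\tau$ do \emph{not} depend on the policy (the adversary fixes them), so they appear identically in the expansions for $\pi_1$ and $\pi_2$. Subtracting the two expressions cancels every $A^i\vec w_{t-i}$ term and leaves
\[
\vec x_{t+1}(\pi_1) - \vec x_{t+1}(\pi_2) \;=\; \sum_{i=0}^{t} A^{i} B\bigl(\vec u_{t-i}(\pi_1) - \vec u_{t-i}(\pi_2)\bigr).
\]

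Next I would apply the triangle inequality followed by submultiplicativity of the operator/Frobenius norm to each summand, then pull the maximum action deviation out of the sum:
\[
\bigl\|\vec x_{t+1}(\pi_1) - \vec x_{t+1}(\pi_2)\bigr\|
\;\leq\; \sum_{i=0}^{t} \|A^{i}\|\,\|B\|\,\bigl\|\vec u_{t-i}(\pi_1)-\vec u_{t-i}(\pi_2)\bigr\|
\;\leq\; \Bigl(\sum_{i=0}^{t}\|A^{i}\|\,\|B\|\Bigr)\,\max_{j\leq t}\bigl\|\vec u_{t-j}(\pi_1) - \vec u_{t-j}(\pi_2)\bigr\|.
\]
This already delivers the first inequality claimed in the lemma.

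For the second inequality, I would invoke the intrinsic stability assumption $\rho_{\max}(A)\leq 1-\delta$, which yields $\|A^{i}\|_F \leq \sqrt{d_x}\,(1-\delta)^{i}$ (spectral bound composed $i$ times, then converted to Frobenius via the standard $\|\cdot\|_F\leq\sqrt{d_x}\,\|\cdot\|_{\mathrm{op}}$ inequality), and combine it with $\|B\|\leq 1$. Summing the geometric series gives $\sum_{i=0}^{t}\|A^{i}\|\|B\| \leq \sqrt{d_x}\sum_{i=0}^{\infty}(1-\delta)^{i} = \sqrt{d_x}/\delta$, and substituting this into the previous display yields exactly the stated bound. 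The only mildly delicate point is the norm bookkeeping—making sure a single factor of $\sqrt{d_x}$ (and not $\sqrt{d_x}^{\,i}$) is extracted from $\|A^i\|_F$—which is handled by going through the spectral norm for the product and converting back to Frobenius at the very end.
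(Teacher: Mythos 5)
Your proposal is correct and follows essentially the same route as the paper: unroll the dynamics to the closed form $\vec x_{t+1}=\sum_{i=0}^t A^i(B\vec u_{t-i}+\vec w_{t-i})$ (the paper does this by induction), cancel the shared disturbances, and bound the remaining sum by the geometric series. Your explicit care in routing $\|A^i\|$ through the spectral norm before converting to Frobenius—so that only a single $\sqrt{d_x}$ factor appears rather than $\sqrt{d_x}^{\,i}$—is actually slightly more rigorous than the paper's own write-up, which leaves this step implicit.
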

\begin{proof}
    We proceed by induction on $t$ to write the state $\vec{x}_{t+1}$ in terms of the previous disturbance and actions.
    The claim is that 
    \begin{align}
        \vec x_{t+1} = \sum_{i=0}^t A^i(B \vec u_ {t-i}+\vec{w}_{t-i})
    \end{align}
    For the base case of $t=1$, the above gives $\vec x_2 = \vec w_1$, which follows by the assumption on the initial action. Now, assuming that the statement is true for $t$, we have that for $t+1$: 
\begin{align}
    \vec{x}_{t+2} &= A \vec{x}_{t+1} + B \vec{u}_{t+1} + \vec{w}_{t+1} = A \left(\sum _{i=0}^t A^i(B \vec{u}_{t-i}+\vec{w}_{t-i}) \right)+ B \vec{u}_{t+1} + \vec{w}_{t+1}
    \\
    &\sum _{i=1}^{t+1} A^i(B \vec{u}_{t-i+1}+\vec{w}_{t-i+1}) + B \vec{u}_{t+1} + \vec{w}_{t+1} = \sum _{i=0}^{t+1} A^i(B \vec{u}_{t-i+1}+\vec{w}_{t-i+1}) 
\end{align}
Subtracting the state expression reached under \{$\vec{u}_s(\pi_1)\}_{s=1}^t$, and \{$\vec{u}_s(\pi_2)\}_{s=1}^t$, the result follows by the bound on the sum of geometric series.
\end{proof}

Now, we show that the actions, and consequently the states, produced by any DAC policy can approximate those of a linear policy with arbitrarily small error $\zeta$.
\begin{proof}[Proof of Lemma \ref{lemma:class-approx}]
let $\vec{u}_{t}(\pi)$ be the action produced by a stationary DAC policy $\pi$, and $\vec{u}_{t}(\pi^{\mathbb{L}})$ be the action produced by a linear policy $\pi^{\mathbb{L}}$. Then, by Lemma \ref{lemma:state-under-linear} we have
\begin{align}
 \vec{u}_{t}(\pi^{\mathbb{L}}) = \sum _{j=0}^{t-1} K (A+BK)^j \vec w_{t-j-1} &= \sum _{j=0}^{p-1} K (A+BK)^j \vec w_{t-j-1} + \sum _{j=p}^{t-1} K (A+BK)^j \vec w_{t-j-1} 
\\
&=\underbrace{\sum _{j=0}^{p-1} M^{[j+1]} \vec w_{t-j-1}}_{\vec u_{t}(\pi)} +  \sum _{j=p}^{t-1} K (A+BK)^j \vec w_{t-j-1}, 
\end{align}
 Where we denoted the $j$-th polynomial in $K$ with $M^{[j+1]}$ in the first sum. Now, note that the first sum is the DAC action. 
 Hence,
\begin{align}
     \|\vec{u}_{t}(\pi) - \vec{u}'_{t}(\pi^{\mathbb{L}})\| \leq \| \sum _{j=p}^t K (A+BK)^j \vec w_{t-j} \| &\stackrel{(\alpha)}{\leq} \sum _{j=p+1}^t (1-\delta)^j \kappa^{\mathbb{L}} w 
     \\
     &\stackrel{(\beta)}{\leq} \kappa^{\mathbb{L}} w \int_{j=p}^\infty e^{-\delta j} dj =  \kappa^{\mathbb{L}} \frac{w}{\delta} e^{-\delta p} \stackrel{(\gamma)}{\leq} \zeta
\end{align}
$(\alpha)$ is because $K$ is assumed a stabilizing linear controller $\|A+BK\|_{\text{op}}\leq 1-\delta$, 
$(\beta)$ is from $1+x\leq e^x$, and lastly $(\gamma)$ by the choice of $ p= \nicefrac{1}{\delta}\log\left(\nicefrac{\kappa^{\mathbb{L}}w}{\delta\zeta}\right)$. This small discrepancy in the actions translates to the same one in the states (up to the stability constant) by  Lemma \ref{lemma:state-deviation-two-policies}: 
\begin{align}
    \| \vec x_{t+1}({\pi}) - \vec x_{t+1}({\pi^{\mathbb{L}}}) \| \leq \frac{1}{\delta}\zeta 
\end{align}
Using the fact that $c_t(\cdot, \cdot)$ is Lipschitz $c_t\big(\vec{x}_t(\pi^{}), \vec{u}_t(\pi^{})\big) - c_t\big(\vec{x}_t(\pi^{\mathbb{L}}), \vec{u}_t(\pi^{\mathbb{L}})\big)=\mathcal{O}(\zeta)$
\end{proof}

\subsection{The non-adaptive case (OGD with fixed learning rate)}
When using OGD with fixed learning rate update as in \cite{pmlr-v97-agarwal19c}, we have that
\begin{align}
    \label{eq:ogd_step}
    M_{t+1} = M_t + \eta G_t
\end{align}
The proof of Lemma \ref{lemma:state-dev} follows the same steps until 
\begin{align}
    \nu_t \leq lz\sum_{i=0}^{t-1}(1-\delta)^i  \| {M}_{t-i-1} - {M}_{t} \| \leq lz\sum_{i=0}^{t-1}(1-\delta)^i \sum_{\tau=t-i-1}^{t-1} \|M_{\tau+1} - M_\tau\|
\end{align}
which we now bound from \eqref{eq:ogd_step} as 
\begin{align}
    lz\sum_{i=0}^{t-1} (1-\delta)^i \sum_{\tau=t-i-1}^{t-1} \|M_{\tau+1} - M_\tau\| \leq lz\eta\sum_{i=0}^{t-1} (1-\delta)^i (i+1)  \|G_\tau\| \leq  l z \eta g \sum_{i=0}^{t-1} (1-\delta)^i (i+1)  \leq \frac{l z \eta g}{\delta^2}
\end{align}
Hence we get that $v_{1:T} \leq \frac{l z \eta g}{\delta^2} T $.
To prove an analogous to Theorem \ref{thm:gt-adap-ftrl}, we have 
\begin{align}    
\sumT \Big( c_t(\vec x_t, \vec u_t) - c_t\big(\vec x_t(\pi_\star), \vec u_t(\pi_\star)\big)\Big) &\leq \underbrace{\sumT\Big( c_t\big(\vec x_t(\pi_{t}), \vec u_t(\pi_{t})\big) - c_t\big(\vec x_t(\pi_\star), \vec u_t(\pi_\star)\big)\Big)}_{\text{Regret of OGD-based stationary policy}} + {\nu}_{1:T}
\\
&\stackrel{(a)}{\leq} \frac{(2\kappa_M)^2}{2\eta} + \frac{\eta}{2}g^2T + v_{1:T} \leq \frac{2\kappa_M^2}{\eta} + \frac{\eta}{2}g^2T + \frac{l z \eta g}{\delta^2} T \label{eq:ogd_set_eta}
\end{align}
where inequality $(a)$ follows from the bound on OGD's regret \cite[Thm. 2.13]{orabona2021modern} (Recall that this sum is the regret of in the standard memoryless case), with $\eta_t \doteq \eta, \forall t$.
optimizing $\eta$ in \eqref{eq:ogd_set_eta}, we get $\eta = \frac{2 \kappa_M \delta}{\sqrt{g(g\delta^2+2lz)}}\frac{1}{\sqrt{T}}$, substituting back we get the non-adaptive OGD bound: 
\begin{align}
    \sumT \Big( c_t(\vec x_t, \vec u_t) - c_t\big(\vec x_t(\pi_\star), \vec u_t(\pi_\star)\big)\Big) \leq \frac{2\kappa_M}{\delta} \sqrt{g(g\delta^2+2lz)} \sqrt{T}
\end{align}

\subsection{On the choice of the decision set $\mathcal{M}$}
We note the different choice of the decision set $\mathcal{M}$ for DAC parametrization in \citep{pmlr-v97-agarwal19c} and some of the follow up papers, where $\mathcal{M}$ is defined as:
\begin{align}
    \mathcal{M} \doteq \left\{M = \Big[M^{[1]} | \dots |  M^{[j]}| \dots| M^{[p]}\Big] : \left\|M^{[j]}\right\| \leq \kappa_M(1-\delta)^j, \forall j\leq p\right\}.
\end{align}
I.e., the norm of the submatrices decays with parameter $j$. Such definition is necessary when analyzing the regret against the \emph{optimal linear controller}, where we have seen from Lemma \ref{lemma:class-approx} that $p\propto \log T$ is necessary, hence $\|\mathcal{M}\|$ \emph{increases with time}. Nonetheless, the exponential decay of the norm w.r.t $j$ still ensures bounded diameter in terms of $\kappa_M$. This  can be seen from e.g., \citep[Lem. 20, claim (iii)]{zhao2022non}. In our case, we analyze the regret against the \emph{optimal DAC} policy directly, and hence we use the definition: 
\begin{align}
    \mathcal{M} \doteq \left\{M = \Big[M^{[1]} | \dots |  M^{[j]} | \dots | M^{[p]}\Big] \ : \ \sum_{j=1}^p\left\|M^{[j]}\right\| \leq \kappa_M\right\},
\end{align}
which appears also in the monograph \citep[Sec. 6.2.4]{hazan-nsc-book}. Here,
$p$ is \emph{pre-determined} and fixed property of the class. Thus, we can use the diameter bound $\|M_1-M_2\| \leq 2\kappa_M$ as the set $\mathcal{M}$ is fixed. We leave to future work extending the analysis provided here to analyze the regret directly against the linear class, which is feasible given the tools developed in the paper.

\subsection{On the strongly stable controller $K$}
In this subsection, we discuss the implication of dropping the assumption $\|A\|_{\text{op}}\leq (1-\delta)$, which allowed us to have $K=0$ as a stabilizing  controller. 
We used the fact that $K=0$ is a stabilizing controller at the following points
\begin{itemize}
    \item In the cost deviation lemma (Lemma. \ref{lemma:state-dev}):  We used that $\vec u(\pi_{1,\dots,t}) = \vec u(\pi_{t})$ if $K=0$. In general $\vec u (\pi_{1,\dots,t})$ and $\vec u (\pi_{t})$ would differ only by $\|K\| \|\Vec{x_t} - \Vec{x_t}(\pi_t)\|$, this is exactly the state deviation term that we bound in the above mentioned lemma, and since $c_t(\vec u,\vec x)$ is Lipschitz in both arguments, the cost deviation can still be bounded but with different constant terms. 
    
    \item In Lemma \ref{lemma-dac-state}, where we write out the state $\vec{x_t}$ in terms of $M$ and $\Vec{w}$. Having a non-zero $K$ would result in the term $\|A+BK\|^i$ instead of $\|A\|^i$. In this case, we can rely on the strong stability assumption, utilized in all OCO-based control works, to bound $\|A+BK\|^i$. The strong stability assumption quantifies the classical stability assumption in control and it states that there exists a strongly stable controller $K$ that is available as an input to our controllers. A strongly stable controller is defined next
\end{itemize}
A linear controller $K$ is $(\kappa, \gamma)$-strongly stable if there exist matrices $L, H$ satisfying $A - BK = HLH^{-1}$, such that the following two conditions are satisfied:
\begin{itemize}
  \item The spectral norm of $L$ satisfies $\|L\|_{\text{op}} \leq 1 -\gamma$.
  \item The controller and transforming matrices are bounded, i.e., $\|K\|_{\text{op}} \leq \kappa$ and $\|H\|_{\text{op}} \|H^{-1}\|_{\text{op}} \leq \kappa$. 
\end{itemize}
Essentially, the existence of $K$ that satisfies the strong stability assumption ensures that we can use the bound 
\begin{align}
    \|A+BK\|_{\text{op}}^t \leq \|H\|_{\text{op}} \|H^{-1}\|_{\text{op}} \|L\|_{\text{op}}^t \leq \kappa (1-\delta)^t.
\end{align}

I.e., the norm $\|A+BK\|_{\text{op}}$ decays exponentially and can still be bounded in terms of geometrically decaying terms $(1-\delta)^t$.

\end{document}